\documentclass[11pt]{ip-journal}
\usepackage{amscd}
\usepackage{cases}
\usepackage{amsmath}
\usepackage{amssymb, pb-diagram,slashed,tikz-cd}
\usepackage{amscd,amssymb,color,url}
\usepackage{tikz}

\newtheorem{theorem}[subsection]{Theorem}
\newtheorem{lemma}[subsection]{Lemma}

\newtheorem{corollary}[subsection]{Corollary}

\theoremstyle{definition}
\newtheorem{remark}[subsection]{Remark}
\newtheorem{definition}[subsection]{Definition}

\numberwithin{equation}{section}

\newcommand{\R}{{\mathbb R}}

\newcommand{\G}{{\Gamma}}
\newcommand{\bs}{{\backslash}}
\newcommand{\wt}[1]{{\widetilde{#1}}}

\usepackage[mathscr]{eucal}

\usepackage{xy}
\xyoption{all}

\DeclareMathOperator{\aff}{Aff}

\DeclareMathOperator{\aut}{{Aut}}

\DeclareMathOperator{\diam}{{diam}}

\DeclareMathOperator{\syl}{{Syl}}

\DeclareMathOperator{\vol}{{vol}}

\begin{document}

\title[Almost Flat Manifolds]
{An almost flat manifold with a cyclic or quaternionic holonomy group bounds}

\author{James F. Davis}
\thanks{The first author is supported by a NSF Grant and the second author is supported a NSFC Key Grant.  The second author wishes to thank Weiping Zhang for bring this problem to his attention.}
\address{Department of Mathematics, Indiana University, Bloomington,
IN 47405, USA}
\email{jfdavis@indiana.edu}

\author{Fuquan Fang}
\address{Department of Mathematics, Capital Normal
University, Beijing, 100048, China} \email{fuquan$\_$fang@yahoo.com}

\begin{abstract}
A long-standing conjecture of Farrell and Zdravkovska and independently S.~T.~Yau states that every almost flat manifold is the boundary of a compact manifold.   This paper gives a simple proof of this conjecture when the holonomy group is cyclic or quaternionic.   The proof is based on the interaction between flat bundles and involutions.
\end{abstract}

\maketitle

\section{Introduction}
\label{sec:intro}

A closed manifold $M$ is   {\it almost flat} if there is
a sequence of metrics $g_i$ on $M$ so that $\vert K_{g_i} \vert \diam
(M,{g_i})^2\to 0$  when $i\to \infty$, where $K_{g_i}$ is the
sectional curvature and $\diam(M,{g_i})$ is the diameter of $M$ with
respect to the metric $g_i$. In his celebrated paper
\cite{Gr1}, Gromov generalized the classical Bieberbach theorem for flat manifolds and proved that every almost flat manifold is finitely covered by a  {\em
nilmanifold},  that is, the quotient of a simply connected nilpotent Lie group by a uniform lattice. (Conversely, work of Farrell and Hsiang \cite{FH} showed that every manifold finitely covered by a nilmanifold is homeomorphic to an almost flat manifold.)
Ruh \cite{Ruh} strengthened Gromov's theorem and proved that an almost flat manifold is diffeomorphic to an {\em infranilmanifold},  that is,  a double coset space $\G\backslash L \rtimes \aut(L) / \aut(L)$ where $L$ is a simply connected nilpotent Lie group and $\G$ is a torsion-free subgroup of the affine group $L \rtimes \aut(L)$ so that the kernel of $\G \to \aut(L)$ has finite index in $\G$ and is discrete and cocompact in $L$.   In fact, Ruh produced a flat connection with parallel torsion on the tangent bundle of an almost flat manifold.  The map $\Gamma \to \aut(L)$  is the holonomy of this connection.  Conversely, it is not difficult to see that every infranilmanifold is almost flat. The class of almost flat
manifolds is much larger than flat manifolds; there are infinitely
many almost flat manifolds in every dimension greater than two.

Almost flat manifolds play a fundamental role in  Riemannian
geometry. By the profound Cheeger-Fukaya-Gromov theorem \cite{CFG},
almost flat manifolds are the fibers in a collapsing sequence of
Riemannian manifolds with bounded curvature and diameter.  All
cuspidal ends of a complete Riemannian manifold with finite
volume and negative pinched sectional curvature are almost flat
manifolds, generalizing the fact that the cuspidal ends of a finite volume
hyperbolic manifold  are flat manifolds.   Pedro Ontaneda recently proved an amazing converse (see Theorem A of \cite{Ont}):   If an almost flat manifold $M$ is the boundary of a compact manifold, then there is a compact manifold $X$ with boundary $M$ so that $X - M$ admits a complete, finite volume Riemannian metric with negative pinched sectional curvature.

By rescaling,  one sees that for an almost flat manifold $M$,  there
is a sequence of Riemannian metrics $g_i$ on $M$ so that
$\diam(M,{g_i})=1$ for all $i$ and  $|K_{g_i}|\to 0$ as $i \to
\infty$.    By Chern-Weil theory, the Pontryagin numbers of an
oriented closed manifold  are integrals of the Pfaffin forms on the
curvature form, and for an almost flat manifold these integrals must
converge to zero as $i \to \infty$, since by the volume comparison
theorem the sequence $\vol(M,{g_i})$ is bounded above.
 Therefore the Pontryagin numbers of an oriented almost flat manifold $M$ all vanish.
 It follows that the disjoint union of $M$ with itself is
an oriented boundary.  Furthermore, if $M$ has an almost complex
structure then, by the same reasoning, all the Chern numbers of $M$
vanish. In particular, this implies that $M$ bounds.  (Throughout this
paper when we say $M$ bounds we mean that $M$ is diffeomorphic to
the boundary of a compact manifold.) This clearly suggests a natural
and very interesting conjecture, {\it almost flat manifolds are
boundaries,} due to Farrell and Zdravkovska \cite{FZ}, which is 
posed independently in the famous problem list of S.T.Yau \cite{Ya}. It is
a well-known theorem of Thom that a closed manifold bounds if and only if all its
Stiefel-Whitney numbers vanish. Wall showed that a closed oriented manifold bounds an
orientable manifold if and only if all Stiefel-Whitney numbers and
all Pontryagin numbers are zero. The above discussion implies that
if an oriented almost flat manifold bounds, then it bounds
orientably.

A remarkable
theorem of Hamrick and Royster \cite{HR} shows that every flat
manifold bounds.  But the corresponding statement for almost flat manifolds remains a conjecture.  In some special cases it has been proven.    The {\em holonomy group} of an infranilmanifold is the
finite group $G$ given by the image of the fundamental group $\G$ in $\aut(L)$.  Farrell and Zdravkovska \cite{FZ} proved that
almost flat manifolds bound provided either that the holonomy group $G$ has order two or that  the holonomy group $G$ acts effectively on the center of $L$.    Upadhyay \cite{Upad} proved that an almost flat manifold bounds if all of the following conditions hold:  $G$ is cyclic, $G$ acts trivially on the center of $L$, and $L$ is 2-step nilpotent.

This paper contains a new and quite simple proof of the above results and proves the more general statement:

\begin{theorem} \label{main_theorem}
Let $M$ be an almost flat manifold and let $\syl_2G$ be the 2-sylow subgroup of its holonomy group.  If $\syl_2G$ is cyclic or generalized quaternionic, then $M$ is the boundary of a compact manifold.
\end{theorem}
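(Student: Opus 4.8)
The plan is to verify Thom's criterion: it suffices to show that every Stiefel–Whitney number of $M$ vanishes, since the Pontryagin numbers already vanish (as recalled in the introduction) and $M$ bounds as soon as all Stiefel–Whitney numbers are zero. By Ruh's theorem $M$ is diffeomorphic to an infranilmanifold $\G\bs L$, and its finite cover $N=\Lambda\bs L$, where $\Lambda=\ker(\G\to G)$, is a nilmanifold on which the holonomy group $G$ acts freely with $M=N/G$. Averaging an inner product on the Lie algebra $\mathfrak l=\R^n$ makes the holonomy a representation $\rho\colon G\to O(n)$, and Ruh's flat connection identifies $TM$ with the flat bundle $c^{*}\xi$, where $\xi$ is the bundle over $BG$ associated to $\rho$ and $c\colon M\to BG$ classifies the cover $N\to M$. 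Thus every Stiefel–Whitney class of $M$ is pulled back from $H^{*}(BG;\Z/2)$.

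First I would reduce to the $2$-Sylow subgroup $P=\syl_2 G$. The intermediate quotient $M_P=N/P$ is again an infranilmanifold, now with holonomy $P$, and the projection $\pi\colon M_P\to M$ is a covering of odd degree $[G:P]$. Since $\pi$ is a local diffeomorphism we have $TM_P=\pi^{*}TM$, and since the degree is odd $\pi_{*}[M_P]=[G:P]\,[M]=[M]$ in $H_n(M;\Z/2)$. The projection formula then gives, for every monomial $w_I$, the equality $\langle w_I(TM),[M]\rangle=\langle \pi^{*}w_I(TM),[M_P]\rangle=\langle w_I(TM_P),[M_P]\rangle$. Hence $M$ and $M_P$ carry identical Stiefel–Whitney numbers, and it is enough to treat the case $G=P$, a cyclic or generalized quaternion $2$-group.

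Now the involution enters. A cyclic or generalized quaternion $2$-group has a unique element $\tau$ of order two, and it is central; it acts freely on $N$, and because it is central the $\pm1$-eigenspaces of $\rho(\tau)\in O(n)$ are $P$-invariant. This yields a splitting of flat bundles $\xi=\xi_{+}\oplus\xi_{-}$ with $\rho(\tau)|_{V_{-}}=-\mathrm{id}$, so $TM=c^{*}\xi_{+}\oplus c^{*}\xi_{-}$. In the pure case $V_{+}=0$ — exactly the genuinely quaternionic situation, where $-\mathrm{id}$ lies in the image of $\rho$ — the summands of $\xi_{-}$ are quaternionic (hence have vanishing $w_1,w_2,w_3$) or are the planar rotation summands in the cyclic case, so $w(TM)$ is a polynomial in the single class $t=c^{*}w_{\mathrm{top}}(\xi_{-})$. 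Consequently every Stiefel–Whitney number is a $\Z/2$-multiple of $\langle t^{\,m},[M]\rangle=\langle w_n(TM),[M]\rangle=\chi(M)\bmod 2$, and $\chi(M)=\chi(N)/|P|=0$ because a positive–dimensional nilmanifold carries a free circle action coming from the center of $L$. All Stiefel–Whitney numbers therefore vanish in this case.

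The main obstacle is the mixed case $V_{+}\neq 0$. Then $w(TM)$ also involves the characteristic classes of $\xi_{+}$, whose holonomy factors through $P/\langle\tau\rangle$ — a group that need not be cyclic or quaternionic — so the reduction to a single Euler class fails and the numbers no longer collapse to $\chi(M)$. Here I would exploit the central circle directly: if $P$ fixes a lattice direction in $Z(L)$ the translation action descends to a free $S^{1}$-action on $M$, whence $M=\partial\bigl(M\times_{S^{1}}D^{2}\bigr)$ bounds outright. The residual possibility, that $\tau$ acts as $-1$ on the entire center so that no circle descends, is precisely the interaction between the flat normal structure and the involution $\tau$ that must be analyzed by hand; this is the technical heart of the argument and the point at which the hypotheses of the earlier partial results were needed.
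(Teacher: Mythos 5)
Your reduction to the $2$-Sylow subgroup via an odd-degree cover is exactly the paper's Lemma \ref{2-group}(i), and your identification of $TM$ as the flat bundle associated to the holonomy representation matches Lemma \ref{infranilsub_flat}. But your proof is genuinely incomplete, and you say so yourself: the characteristic-class computation in $H^*(BP;\Z/2)$ only closes in the ``pure'' case $V_+=0$, where $w(TM)$ collapses to powers of a single class and all Stiefel--Whitney numbers reduce to $\chi(M)\bmod 2 = 0$. In the mixed case $V_+\neq 0$ the holonomy of $\xi_+$ factors through $P/\langle\tau\rangle$ (dihedral, when $P$ is quaternionic), the cohomology acquires several polynomial generators, and the numbers no longer collapse; your fallback via a descending free circle action requires $P$ to fix a direction in the center, which is an extra hypothesis of exactly the kind the earlier partial results \cite{FZ}, \cite{Upad} imposed. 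So the technical heart of the theorem is missing, not merely deferred: there is no reason a priori that the image of $[M]$ in $H_n(BP;\Z/2)$ pairs trivially with the mixed monomials, and no purely representation-theoretic splitting of $\xi$ will show that it does.

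The paper's proof avoids the case split entirely by working geometrically rather than in $H^*(BP;\Z/2)$. Besides the order-two element $g\in G$ (unique since $\syl_2 G$ is cyclic or quaternionic, by Burnside), it uses a second involution: a \emph{central involution} $\tau\in Z(N)\backslash Z(L)$, a $G$-invariant translation, which exists for any $2$-group holonomy by Lemma \ref{2-group}(ii) and induces a nontrivial involution $\tau\colon M\to M$. The biset analysis of Remark \ref{two_actions}, combined with the uniqueness of $g$, identifies the fixed set as $F=\{[x]\mid \tau x = gx\}$, and the bundle involution $[x,v]\mapsto[\tau x, gv]$ on $TM=(N\backslash L)\times_G T_eL$ then restricts to the \emph{identity} on $TM|_F$ — note this is not $d\tau$, which would act by $-1$ on the normal directions. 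Lemma \ref{+1 eigenbundle} finishes: the quotient of $M\times I$ minus a tubular neighborhood of $F\times\{1/2\}$ gives a cobordism of bundles from $TM$ to a bundle pulled back from $F$, so every top-degree Stiefel--Whitney monomial vanishes because $\dim F < \dim M$, and Thom's theorem applies uniformly, with no dichotomy between eigenspace decompositions of the holonomy representation. If you want to salvage your approach, the lesson is that the central translation $\tau$ is the ingredient you never used: it is what converts the eigenspace decomposition you wrote down into an involution on the total space whose fixed set is small, which is what actually kills the mixed-case numbers.
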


Since all rational Pontryagin numbers vanish, from the above
theorem and cobordism theory it follows that every oriented almost
flat manifold with such a holonomy group bounds an oriented
manifold.  However, it remains difficult to answer:

\vskip 2mm

\noindent {\bf Problem} (a). Does every almost flat Spin manifold
(up to changing Spin structures) bound a Spin manifold? (b). Is the bordism class $[M,h]\in \Omega _*(BG)$ given by the holonomy map of an almost flat manifold trivial?
\vskip 2mm

For an almost flat  Spin  manifold $M$ with holonomy map $h: M\to
BG$, perhaps the $\eta$-invariant can be used to detect information on the
bordism class of $[M, h]$ (cf. \cite{GMP}.)

A strong conjecture of Farrell and Zdravkovska \cite{FZ} asked whether
an almost flat (or flat) manifold bounds a compact manifold whose interior admits a complete finite volume metric with negative (or constant negative) sectional curvature. Long and Reid \cite{LR} give counterexamples to the flat version of the conjecture using $\eta$-invariants.  As a corollary of the recent result of Ontaneda mentioned above, we see that this strong conjecture is true in the case of cyclic or quaternionic holonomy:

\begin{corollary}
 An almost flat manifold whose 2-sylow subgroup of the holonomy group is cyclic or quaternionic bounds a compact manifold whose interior admits a complete, finite volume Riemannian metric with negative pinched sectional curvature.
\end{corollary}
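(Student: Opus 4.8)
The plan is to deduce the corollary by a direct combination of Theorem~\ref{main_theorem} with the converse of Ontaneda recorded in the introduction (Theorem~A of \cite{Ont}); no new geometry is required. Let $M$ be an almost flat manifold whose $2$-Sylow subgroup $\syl_2 G$ of the holonomy group is cyclic or generalized quaternionic. First I would apply Theorem~\ref{main_theorem} to $M$: its hypothesis is exactly the stated condition on $\syl_2 G$, so its conclusion is that $M$ is the boundary of a compact manifold, i.e.\ $M$ is diffeomorphic to $\partial X_0$ for some compact $X_0$.

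Next I would feed this into \cite{Ont}. Ontaneda's Theorem~A takes as input an almost flat manifold $M$ that bounds a compact manifold and produces a (possibly different) compact manifold $X$ with $\partial X = M$ such that $X - M$ carries a complete, finite volume Riemannian metric of negative pinched sectional curvature. Both hypotheses are now in place: $M$ is almost flat by assumption, and it bounds by the previous step. Invoking Theorem~A verbatim then yields exactly the manifold $X$ demanded by the corollary, which completes the argument.

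The one point to verify is that the two statements interlock cleanly. Theorem~\ref{main_theorem} delivers precisely the hypothesis ``$M$ is the boundary of a compact manifold'' required by \cite{Ont}, and almost flatness is an intrinsic property of $M$, independent of the chosen nullbordism $X_0$; hence it persists unchanged when we pass to Ontaneda's $X$. I therefore do not expect any genuine obstacle here: once Theorem~\ref{main_theorem} is established, the corollary is a formal consequence, the only subtlety being the bookkeeping that matches the output of the bounding theorem to the input hypothesis of \cite{Ont}.
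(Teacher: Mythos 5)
Your proposal is correct and matches the paper's intended argument exactly: the corollary is stated as an immediate consequence of Theorem~\ref{main_theorem} combined with Ontaneda's Theorem~A, which is precisely the composition you carry out. The paper gives no separate proof beyond this observation, so your bookkeeping (almost flatness is intrinsic to $M$ and the bounding hypothesis is supplied by the main theorem) is all that is needed.
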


\section{Almost flat manifolds with cyclic or quaternionic holonomy bound}  \label{setup}

Recall that a {\em nilmanifold} is a quotient $N \backslash L$ of a simply connected nilpotent Lie group $L$ by a discrete cocompact subgroup $N$.   A nilmanifold is parallelizable; indeed one projects  a basis of left invariant vector fields on $L$ to the nilmanifold.  Since $N \subset L$ are both nilpotent, their centers $Z(N) \subset Z(L)$ are nontrivial.  Translation by an element of order two in $Z(N) \backslash Z(L)$ gives a fixed-point free involution on $N \backslash L$.

We now have two proofs that a nilmanifold bounds.  The first proof is that a nilmanifold is parallelizable, so its Stiefel-Whitney numbers are zero, hence by Thom's theorem it bounds.   The second proof is that a nilmanifold admits a fixed-point free involution, and any closed manifold with a fixed-point free involution  $\tau : M \to M$ is a boundary:
$$
\partial \left(  \frac{M \times [0,1]}{(m,t) \sim (\tau (m), 1-t)}\right) = M.
$$
Here The proof of our main result involves a combination of these two ideas.  Lemma \ref{+1 eigenbundle} is key.

 An {\em infranilmanifold} is a double coset space $M = \Gamma \backslash L \rtimes G/ G$ where $L$ is a simply connected nilpotent Lie group, $G$ is a finite subgroup of $\aut(L)$ and $\Gamma$ is a discrete torsion-free cocompact subgroup of $L \rtimes G$ which maps epimorphically to $G$ under the projection $L \rtimes G \to G$.  We require an infranilmanifold to have positive dimension.


  Let $N = \Gamma \cap L$.  Then $N$ is a normal subgroup of $\Gamma$ and the sequence
$$1\to N\to\Gamma \to G\to 1$$
is short exact.  Furthermore $N$ is a discrete cocompact subgroup of $L$ and hence is a finitely generated, torsion-free, nilpotent group.  The group $N$ is called the {\em nillattice}  and $G$ is called the {\em holonomy group} of the infranilmanifold.

We will replace the tautological
regular
covers
$$
L \rtimes G/G \to N \backslash L \rtimes G/G  \to \G \backslash L \rtimes G/G,
$$
using the diffeomorphisms $L \cong L \rtimes G/G$ and $N \backslash L \cong N \backslash L \rtimes G/G$.   We instead consider the covers
$$
L \xrightarrow{p} N \backslash L \xrightarrow{\pi} \G \backslash L \rtimes G/G .
$$
with $p(l) = Nl$ and $\pi(Nl) = \G (l,e) G$.   The affine group $\aff(L) = L \rtimes \aut L$ acts on $L$ via $(l',g)l = l'g(l)$.   Thus $\G < \aff(L)$ acts freely on $L$ and, in fact, $\pi \circ p$ is a regular $\G$-cover.   Likewise $G$ acts freely on $N \backslash L$ via $g(Nl) = N \gamma l$ where $\gamma \in \G$ maps to $g \in G$ and $\pi$ is a regular $G$-cover.   We call this $G$-action on $N \backslash L$ the {\em affine action}.

The $G$-action on $L$ given by $G < \aut L$ leaves the center $Z(L)$ invariant.   The $G$-action on $L$ can be  reinterpreted as conjugation in the affine group by using the short exact sequence $1 \to L \to L \rtimes G \to G \to 1$.  By comparing with the short exact sequence $1 \to N \to \G \to G \to 1$, we see that the $G$-action also leaves $Z(N)$ invariant but need not leave $N$ invariant.
We call the $G$-actions on $L$, $Z(L)$, $Z(N)$, and $Z(N) \backslash Z(L)$ {\em conjugation actions}.  They are all actions via group automorphisms.

\begin{definition}
 A {\em central involution} $\tau$ of an infranilmanifold $M = \G \backslash L \rtimes G/G$ is an element $\tau \in Z(N) \backslash Z(L)$ of order 2  which is invariant under the conjugation action of $G$.  This determines maps $\tau : M \to M, ~ \G(l,g)G \mapsto \Gamma(Tl,g) G$ and $\tau : N \bs L \to N \bs L, ~ N(l,g)G \mapsto N(Tl,g) G$ where $T \in Z(L)$ is a representative for $\tau$.
\end{definition}

%

Central involutions were key in all previous work on this problem \cite{HR}, \cite{FZ}, \cite{Upad} and will be for us too.

\begin{lemma}\label{2-group}

\begin{enumerate}

 \item Let $M$ be a closed manifold with an epimorphism of its  fundamental group to a finite group $G$.  Then $M$ bounds if and only if $H \backslash \widetilde M$ bounds where $\widetilde M$ is the $G$-cover of $M$ and $H$ is a 2-Sylow subgroup of $G$.

 \item  Any infranilmanifold $M = \G \backslash L \rtimes G/G$ with $G$ a 2-group has a central involution.

\end{enumerate}
\end{lemma}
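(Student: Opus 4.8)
The plan is to prove (i) with an unoriented bordism transfer and (ii) with an elementary $p$-group fixed-point argument.

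For (i), set $d=[G:H]$; since $H$ is a $2$-Sylow subgroup, $d$ is odd. The epimorphism $\pi_1(M)\to G$ names the regular $G$-cover $\wM\to M$, and $p\colon H\bs\wM\to M$ is then a $d$-fold covering of closed manifolds. First I would introduce the transfer $p^{!}\colon \Omega^O_*(M)\to\Omega^O_*(H\bs\wM)$ of this finite cover, which on a representative $[Y\xrightarrow{f}M]$ returns the pulled-back cover $[\,Y\times_M(H\bs\wM)\to H\bs\wM\,]$ and satisfies $p_*\circ p^{!}=d\cdot\mathrm{id}$. Applying $p^{!}$ to $[M,\mathrm{id}_M]\in\Omega^O_*(M)$ yields $[H\bs\wM,\mathrm{id}]$, so $p_*p^{!}[M,\mathrm{id}_M]=[\,H\bs\wM\xrightarrow{p}M\,]=d\,[M,\mathrm{id}_M]$.

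Next I would push forward along the collapse $M\to\mathrm{pt}$, which sends $[Y\to M]$ to $[Y]\in\Omega^O_*$, getting $[H\bs\wM]=d\,[M]$ in the unoriented bordism ring. By Thom's computation $\Omega^O_*\cong\Z/2[x_i]$ every class has order two, and as $d$ is odd we get $d\,[M]=[M]$, hence $[H\bs\wM]=[M]$. Since a closed manifold bounds precisely when its unoriented bordism class vanishes, $M$ bounds if and only if $H\bs\wM$ bounds. The one point needing care is that $H\bs\wM\to M$ need not be regular, but the covering transfer and the identity $p_*p^{!}=d\cdot\mathrm{id}$ hold for arbitrary finite covers, so this is not a real obstacle.

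For (ii), I would first identify the torus on which $G$ acts. As the infranilmanifold has positive dimension, $L$ is a nontrivial simply connected nilpotent Lie group, so $Z(L)$ is a nonzero vector group $\cong\R^k$ with $k\ge 1$; by Mal'cev theory $Z(N)=N\cap Z(L)$ is a lattice in $Z(L)$, so $Z(N)\bs Z(L)\cong T^k$. Its subgroup $A$ of elements of order dividing two is $\cong(\Z/2)^k$, a nonzero finite $2$-group whose nonzero elements are exactly the elements of order two. The conjugation action of $G$ on $Z(N)\bs Z(L)$ is by automorphisms, hence preserves the characteristic subgroup $A$, so the $2$-group $G$ acts on the nonzero finite $2$-group $A$. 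Finally I would invoke the standard fixed-point lemma: partitioning $A$ into $G$-orbits, each nontrivial orbit has size divisible by two, so $|A^G|\equiv|A|\equiv 0\pmod 2$; since $0\in A^G$ this forces $|A^G|\ge 2$, yielding a nonzero $G$-fixed element $\tau\in A$. Such a $\tau$ has order two and is invariant under the conjugation action, i.e.\ it is a central involution. This part is purely formal once $T^k$ and its $2$-torsion are in hand, so I expect the transfer computation in (i) to be the only place demanding attention.
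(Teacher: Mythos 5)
Your part (ii) is essentially the paper's own argument: the paper also reduces to the finite abelian $2$-group of $2$-torsion in $Z(N)\backslash Z(L)$ (it calls it $\Sigma$, the subgroup generated by the order-$2$ elements) and extracts a nontrivial $G$-fixed element by the same orbit-counting for a $2$-group acting on a nontrivial $2$-group. Your version is slightly more careful in that you justify why this group is nontrivial, via the Mal'cev facts that $Z(L)\cong \R^k$ with $k\ge 1$ and that $Z(N)=N\cap Z(L)$ is a lattice in $Z(L)$, so $Z(N)\backslash Z(L)\cong T^k$; the paper leaves these facts implicit, so this part is fine and, if anything, more complete.

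For part (i), your conclusion and overall strategy are sound, but the pivotal claim as you state it is overstated, and this is a genuine soft spot. The identity $p_*\circ p^{!}=d\cdot\mathrm{id}$ is \emph{not} a formal property of covering-space transfers ``for arbitrary finite covers'' in a generalized homology theory: the Becker--Gottlieb composite for a $d$-fold cover induces multiplication by $d$ on ordinary homology, but in general theories (stable homotopy, $KO$, framed bordism --- the Kahn--Priddy transfer is the standard cautionary example) it is not multiplication by the degree. Geometrically, $p_*p^{!}[Y\xrightarrow{f}M]=[\wt Y\to M]$ where $\wt Y\to Y$ is the pulled-back $d$-fold cover, and $\wt Y$ is not the disjoint union of $d$ copies of $Y$, so there is something to prove. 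The identity \emph{is} true in $\Omega^O_*(-)$, but for a special reason: $MO$ splits as a wedge of suspensions of $H\Z/2$, so bordism classes over any base are detected by mod $2$ tangential characteristic numbers $\langle w_J(TY)\cup f^*x,[Y]\rangle$, and for the cover $\wt p:\wt Y\to Y$ one has $T\wt Y=\wt p^{\,*}TY$ and $\wt p_*[\wt Y]=d[Y]$ in mod $2$ homology, giving $[\wt Y\to M]=d[Y\to M]$. Notice that this verification is exactly the paper's one-line proof applied fiberwise: the paper skips the transfer entirely and observes directly that $H\backslash\wM\to M$ is an odd-degree cover, so $T(H\backslash\wM)=p^*TM$ and $p_*[H\backslash\wM]=d[M]=[M]$ mod $2$, whence the two manifolds have the same Stiefel--Whitney numbers and, by Thom, one bounds iff the other does. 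So to make your transfer argument airtight you must either invoke the $MO$-splitting (or detection by characteristic numbers) to justify $p_*p^{!}=d\cdot\mathrm{id}$, or collapse the argument to the direct Stiefel--Whitney computation --- at which point the transfer scaffolding buys you nothing over the paper's proof. With that one step repaired, your argument is correct; as written, the phrase ``hold for arbitrary finite covers'' hides the only real content of the lemma.
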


\begin{proof}
 (i) Note that  $H \backslash \widetilde M \to M$ is a odd-degree cover, hence the domain and target have the same Stiefel-Whitney numbers; thus one bounds if and only if the other does.

  (ii)  Let $\Sigma$ be the subgroup of $Z(N)\backslash Z(L)$ generated by the elements of order 2.    Since the 2-group $G$ acts as group automorphisms of the abelian 2-group $\Sigma$, there must be at least two orbits of cardinality one, hence there must be a non-trivial element fixed by $G$.
\end{proof}

%
%
To analyze the fixed point set of a central involution, we  need a group theoretic remark.

\begin{remark} \label{two_actions}
Let $X$ be an $(H,K)$-biset where both $H$ and $K$ act freely on $X$.
Let $q : X \to  H \backslash X$ be the quotient map,
let $F$ be the fixed set of $K$ acting on $H \backslash X$, and let $\widetilde F = q^{-1}F$.  For $x \in \widetilde F$, there is a function $\varphi_x : K \to H$ so that $xk =  \varphi_x(k) x$.   Since $H$ acts freely, this function is uniquely defined, since $X$ is a biset, this function is a homomorphism,
and since $K$ acts freely, it is a monomorphism.
For a monomorphism $\varphi: K \to H$, let
$$\widetilde F_\varphi = \{x \in X \mid \forall k \in K, xk =  \varphi(k) x \}.
$$
Then $\widetilde F = \coprod \widetilde F_\varphi$.   It is easy to see that $h \widetilde F_\varphi = \widetilde F_{c_h \circ \varphi}$ where $c_h : H \to H$ is  conjugation.  Thus $q(\widetilde F_\varphi) = q(\widetilde F_{c_h \circ \varphi})$.
The group $H$ acts on the set of monomorphisms $K \to H$ by conjugation, let $[\varphi]$ denote an orbit. Let $F_{[\varphi]} = q(\widetilde F_\varphi)$.   Then $F = \coprod F_{[\varphi]}$.
\end{remark}

Note that there is a bijection between $(H,K)$-bisets and left $(H \times K)$-sets, where $hxk$ corresponds to   $hk^{-1}x$.

Later we will apply Remark \ref{two_actions} to analyze the fixed point set of a central involution $\tau : M \to M$ by setting $X = N \backslash L$, $H = G$ and $K = \langle \tau \rangle$.

A vector bundle $E \to B$ is {\em flat} if it has finite structure group, that is, $E \cong {\widetilde B} \times_G V$ for some finite, regular $G$-cover ${\widetilde B} \to B$ and some $\R G$-module $V$.  We call such a flat structure a {\em $(G,V)$-structure}.   Such a bundle (over a CW-complex) is the pullback of the flat bundle $EG \times_G V \to BG$ along a map $B \to BG$.   The regular $G$-cover $\wt B \to B$ can also be specified by a homotopy class of map $B \to BG$ or by a $G$-conjugacy class of homomorphism $\pi_1 B \to G$.


Tangent bundles of infranilmanifolds are flat:

\begin{lemma} \label{infranilsub_flat}
Consider an infranilmanifold $M = \G\bs L \rtimes G /G$.   Note that $T_eL$ is an $\R G$-module.
The tangent bundle of $M$ is flat:
$$
TM = (N \backslash L ) \times_G T_eL
$$
\end{lemma}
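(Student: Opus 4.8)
The plan is to exhibit the claimed flat structure directly by pushing the canonical parallelism of the Lie group $L$ down through the two covers $L \xrightarrow{p} N\bs L \xrightarrow{\pi} M$. First I would trivialize $TL$ using left-invariant vector fields: the map $T_lL \to T_eL$, $v \mapsto d(L_{l^{-1}})(v)$, gives a diffeomorphism $TL \cong L \times T_eL$ under which left translation $L_{l'}$ by any $l' \in L$ acts as $(l,w) \mapsto (l'l, w)$, i.e. trivially on the fiber. In particular $N < L$ acts this way, so passing to the quotient shows $T(N\bs L) \cong (N\bs L) \times T_eL$, recovering the parallelizability of the nilmanifold. Since $\pi$ is a regular $G$-cover, $d\pi$ identifies $TM$ with the quotient of $T(N\bs L)$ by the derivative of the affine $G$-action, so the lemma reduces to computing that derivative in the left-invariant trivialization.

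The key step is that computation. Lift $g \in G$ to $\gamma = (l_0, g) \in \G$; then $\gamma$ acts on $L$ by $\Phi_\gamma(l) = l_0\, g(l) = (L_{l_0} \circ g)(l)$, so $d\Phi_\gamma = dL_{l_0} \circ dg$. Because $g$ is a group automorphism one has $g \circ L_l = L_{g(l)} \circ g$, which differentiates to $dg \circ dL_l = dL_{g(l)} \circ dg_e$, where $dg_e : T_eL \to T_eL$ is the derivative at the identity. Combining these, for the vector $dL_l(w) \in T_lL$ with $w \in T_eL$,
\[
d\Phi_\gamma\bigl(dL_l(w)\bigr) = dL_{l_0 g(l)}\bigl(dg_e(w)\bigr).
\]
In the trivialization this reads $(l,w) \mapsto (\Phi_\gamma(l),\, dg_e(w))$: the base transforms by the affine action while the fiber transforms by $dg_e$. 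Crucially, $dg_e$ depends only on $g$ and not on the translational part $l_0$, and $g \mapsto dg_e$ is precisely the $\R G$-module structure on $T_eL$ coming from $G < \aut L$; thus the derivative of the affine action is the diagonal action of $G$ on $(N\bs L) \times T_eL$.

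Finally I would verify that the descent is consistent: two lifts of a given $g$ differ by left multiplication by some $n \in N$, which is a deck transformation of $p$ and leaves $dg_e$ unchanged, so the diagonal action is well defined on $(N\bs L) \times T_eL$. Quotienting by $G$ then yields $TM = G\bs\bigl((N\bs L)\times T_eL\bigr) = (N\bs L)\times_G T_eL$, the asserted $(G, T_eL)$-structure. I expect the only real subtlety to be the equivariance bookkeeping in the second step — specifically, confirming that the fiber part of the derivative is genuinely independent of the translation $l_0$ and coincides with the linear $G$-action on $T_eL$ — rather than any hard analysis; once the automorphism identity $g \circ L_l = L_{g(l)} \circ g$ is exploited, the rest is formal.
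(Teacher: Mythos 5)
Your proof is correct and is essentially the paper's argument: both trivialize $TL$ by left translation and use the automorphism identity $g \circ L_l = L_{g(l)} \circ g$ to see that, in this trivialization, the affine action moves the fiber $T_eL$ only through the linear holonomy $(dg)_e$, which is independent of the translational part. The paper packages this as a single check that the trivialization $\Phi : L \times T_eL \to TL$ is $L \rtimes G$-equivariant and then quotients by $\G$ at once, whereas you factor the descent through $N \bs L$ and verify lift-independence by hand --- an organizational difference, not a mathematical one.
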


\begin{proof}
Note that $L$, like all Lie groups, is parallelizable.  Indeed there is an isomorphism of vector bundles over $L$ with $\Phi : L \times T_eL \xrightarrow{\cong} TL$ given by $\Phi(l,v) = d(L_l)_{e}v$, where $L_l : L \to L$ is left translation, defined by $L_l(l') = ll'$.
Note $L \rtimes G$ acts on $L$ via $(l',g)l = l'g(l)$, on $TL$ via the differential of this action, and on $T_eL$ via $v \mapsto (dg)_ev$.  It is straightforward to check that $\Phi$ is $L \rtimes G$-equivariant with respect to the diagonal $L \rtimes G$-action on the domain and the action given by the differential on the target.
 Verify the $L$-equivariance and $G$-equivariance separately and use the identity $g \circ L_l = L_{g(l)} \circ g : L \to L$.
 Note $\Gamma \subset L \rtimes G$ acts freely as deck transformations on  $L$ and the subgroup $N$ acts trivially on $T_eL$, so the map $ \Phi$ descends to the desired isomorphism $N \backslash L  \times_G T_eL \cong TM$.
 \end{proof}

\begin{lemma} \label{+1 eigenbundle}
Let $\tau: TM \to TM$ be a nontrivial bundle involution on the tangent bundle of a closed connected manifold $M$.   Let $F \subset M$ be the fixed set of the involution restricted to $M$.   If $\tau$ restricted to $TM|_F$ is the identity, then $M$ bounds.
\end{lemma}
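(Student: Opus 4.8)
The plan is to analyze the ordinary involution of $M$ that $\tau$ covers, reduce the whole problem to showing that a single normal projective bundle bounds, and only at the end feed in the hypothesis on $\tau$. First I would record the two involutions in play and the single way they differ. Write $\bar\tau\colon M\to M$ for the base map of $\tau$; since $\tau$ is a nontrivial bundle involution, $\bar\tau$ is a nontrivial involution of $M$, which I may take to be smooth and (after averaging a metric over $\langle\bar\tau\rangle$) isometric. Its fixed set $F$ is then a closed submanifold with normal bundle $\nu$, and the genuine differential $d\bar\tau$ restricts over $F$ to $+1$ on $TF$ and $-1$ on $\nu$. The hypothesis asserts that the \emph{given} bundle involution $\tau$ restricts over $F$ to the identity on all of $TM|_F=TF\oplus\nu$. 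Thus $\tau$ and $d\bar\tau$ agree on $TF$ but are opposite on $\nu$; this discrepancy on the normal bundle is the whole content of the lemma and is what must eventually be exploited. Note that $\tau$ is genuinely different from $d\bar\tau$: the latter never restricts to the identity over $F$ unless $\bar\tau=\id$.

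Second I would dispose of the free part using only that $\bar\tau$ is an involution. Away from $F$ the involution $\bar\tau$ is free, so the twisted cylinder construction recalled above applies to $M_0:=M\smallsetminus\mathring D(\nu)$, the complement of a tubular neighborhood: the manifold $(M_0\times[0,1])/\!\big((m,t)\sim(\bar\tau m,1-t)\big)$ is a smooth bordism whose boundary is $M_0\cup_{S(\nu)}Y$, where $S(\nu)$ is the normal sphere bundle and $Y=(S(\nu)\times[0,1])/\Z_2$ is the twisted $I$-bundle, i.e. the disk bundle $D(\gamma)$ of the tautological line bundle $\gamma$ over $\R P(\nu)$. Since this boundary already bounds and $M=M_0\cup_{S(\nu)}D(\nu)$, a standard cut-and-paste shows $M$ bounds provided $D(\nu)$ and $Y$ are cobordant rel $S(\nu)$ --- equivalently provided the closed manifold $D(\nu)\cup_{S(\nu)}D(\gamma)$ bounds. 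One checks fiberwise (where $D^r\cup_{S^{r-1}}D(\gamma)=\R P^r$) that this closed manifold is precisely the projectivization $\R P(\nu\oplus\underline\R)$. So the entire problem reduces to: \emph{the projective bundle $\R P(\nu\oplus\underline\R)$ bounds.} This is a form of Conner--Floyd fixed-point theory; in particular it has not yet used the hypothesis, and indeed without it the conclusion is false, since for the standard involution on $\R P^2$ the piece over the isolated fixed point is $\R P^2$, which does not bound.

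Third, and this is the crux, I would use the full force of the hypothesis --- that $\tau$ exists on \emph{all} of $TM$ and is the identity over $F$ --- to construct a free involution on $\R P(\nu\oplus\underline\R)$, so that by the twisted cylinder argument it bounds. The geometric picture is that the naive quotient $(M\times[0,1])/\Z_2$ is a manifold except for fiberwise cone singularities $C\big(\R P(\nu\oplus\underline\R)\big)$ along $F\times\{\tfrac12\}$, and resolving these cones is exactly the obstruction just isolated. The role of $\tau$ is that, because over $F$ it acts as $+1$ on $\nu$ while $d\bar\tau$ acts as $-1$, the combination of $\tau$ with the antipodal/blow-up data along $F$ should supply a \emph{fixed-point-free} $\Z_2$-action on the link $S(\nu\oplus\underline\R)$ commuting with the antipodal map, hence a free involution on $\R P(\nu\oplus\underline\R)$ and a genuine desingularization of each cone into an honest compact manifold with boundary $M$.

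I expect this last step to be the main obstacle. The hypothesis $\tau|_F=\id$ is a statement only along $F$, yet the desingularization lives in a neighborhood of $F$ where $\tau$ must interact with $d\bar\tau$ and with the $[0,1]$-direction of the cylinder; extracting from the mere \emph{global existence} of $\tau$ an honestly free action, and verifying that it matches the antipodal structure on $S(\nu\oplus\underline\R)$, is the delicate point (and is exactly what fails for $\R P^2$, whose relevant piece admits no free involution since its Euler characteristic is odd). Conceptually the finished proof realizes both classical proofs that a nilmanifold bounds at once: the twisted-cylinder (free-involution) argument handles $M\smallsetminus F$, while the triviality of $\tau$ over $F$ --- an eigenbundle/parallelizability-type condition --- handles the cone points.
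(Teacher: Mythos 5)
Your first two steps are correct and in fact reproduce the first half of the paper's argument: removing an invariant tubular neighborhood of $F\times\{1/2\}$ from $M\times I$ and dividing by the involution is exactly the paper's cobordism $W=(M\times I-\Int N)/\tau$ from $M$ to the projective bundle $P(\nu\oplus 1)$, which is your $D(\nu)\cup_{S(\nu)}D(\gamma)$; so the reduction ``$M$ bounds iff $P(\nu\oplus 1)$ bounds'' is sound and, as you say, uses nothing about $\tau$. The genuine gap is your step 3, and it is not merely unfinished --- the mechanism you propose provably fails. Take $M=T^2=\R^2/\Z^2$ with $\bar\tau[x]=[-x]$ and the \emph{trivial} lift $\tau([x],v)=([-x],v)$ in the flat trivialization of $TM$: this is a nontrivial bundle involution which is the identity on $TM|_F$, so the lemma's hypotheses hold. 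Here $F$ is four points, $\nu\oplus 1$ is trivial of rank $3$, and $P(\nu\oplus 1)$ is four disjoint copies of $\R P^2$. A free involution on the link $S(\nu\oplus 1)\cong S^2$ over a component of $F$ commuting with the antipodal map would descend to a free involution on $\R P^2$, which is impossible since $\chi(\R P^2)=1$. So no fiberwise or component-preserving desingularization of the cone points of the kind you describe can exist; the only free involutions on this $P(\nu\oplus 1)$ swap components, and nothing in the data of $\tau$ produces such swaps in general. (Note that $4\,\R P^2$ does bound, consistent with your reduction --- it is only the free-involution route to proving it that is blocked.)

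The paper closes the gap homotopy-theoretically rather than geometrically: it carries the \emph{bundle} across the cobordism. The involution $(v,t)\mapsto(\tau(v),1-t)$ on $TM\times I$ descends to a vector bundle $\xi$ on $W$ with $\xi|_M=TM$, and --- this is the single place the hypothesis $\tau|_{TM|_F}=\id$ is used, via homotopy invariance of equivariant bundles over $D(\nu\oplus 1)$ (Proposition 1.3 of \cite{Segal}) --- one gets $\xi|_{P(\nu\oplus 1)}\cong\pi_P^*\,(TM|_F)$, a bundle pulled back from $F$. Since Stiefel--Whitney numbers of bundles are cobordism invariants, $w_J(TM)[M]=\bigl(\pi_P^*w_J(TM|_F)\bigr)[P(\nu\oplus 1)]=0$ for every partition $J$ of $\dim M$, because $w_J(TM|_F)=0$ already in $H^*(F)$ as $|J|=\dim M>\dim F$; Thom's theorem then gives that $M$ bounds. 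The moral is that you should not try to show $P(\nu\oplus 1)$ bounds by exhibiting a symmetry (and note your reduction needs only that $M$ and $P(\nu\oplus 1)$ are cobordant, which you already have): instead transport $TM$, not $TP(\nu\oplus 1)$, across $W$, and let the hypothesis identify its restriction to the far end as pulled back from the lower-dimensional $F$.
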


\begin{proof}
Let $I = [0,1]$. Extend the involution on $M$ to an involution on $M \times I$ by setting $(m,t) \mapsto (\tau(m), 1-t)$.   Choose a closed involution invariant tubular neighborhood $N$ of the fixed set, thus $F \times \{1/2\} \subset N \subset M \times I$ (see Figure \ref{cylinder}).  Note that $(N, \partial N)$ is equivariantly diffeomorphic to the (disk bundle, sphere bundle) pair $(D(\nu \oplus 1), S(\nu \oplus 1))$ where $\nu = \nu(F \hookrightarrow M)$ is the normal bundle and where the involution on the bundle pair is given by fiberwise multiplication by $-1$.  Let $P(\nu \oplus 1)$ be the projective bundle $S(\nu \oplus 1)/(v \sim -v)$.  Then $W = \left(M\times I - \text{int }N \right)/\tau$ gives a cobordism of manifolds from $M$ to $P(\nu \oplus 1)$.

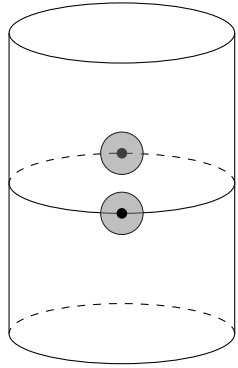
\begin{figure}
\begin{tikzpicture}
 \draw (0,4) arc (0:360: 1.5cm and .4cm ) ;
  \draw[dashed] (0,2) arc (0:180 : 1.5cm and .4cm ) ;
 \draw (0,2) arc (360:180: 1.5cm and .4cm ) ;
 \draw (-1.5cm,1.6cm) circle (8pt)   ;
 \fill [gray, opacity=.5]  (-1.5cm,1.6cm) circle (8pt)   ;
 \fill (-1.5cm,1.6cm) circle (2pt);
  \fill (-1.5cm,2.4cm) circle (2pt);
    \draw (-1.5cm,2.4cm) circle (8pt);
\fill [gray, opacity=.5]  (-1.5cm,2.4cm) circle (8pt);
  \draw[dashed] (0,0) arc (0:180 : 1.5cm and .4cm ) ;
 \draw (0,0) arc (360:180: 1.5cm and .4cm ) ;
 \draw (0,0) -- (0,4);
 \draw (-3,0) -- (-3,4);
\end{tikzpicture}
\caption{$M\times I$ with $M = S^1$ and $\tau(z) = \overline z$.   The disks are the tubular neighborhood $N$. }
\label{cylinder}
\end{figure}

We now upgrade to a cobordism of bundles.  Note that $TM \times I$ is a bundle over $M \times I$.  The bundle involution on $TM$ extends to a bundle involution $TM \times I$ by setting $(v,t) \mapsto (\tau(v), 1-t)$.  This descends to a  bundle $\xi$ over $W$ which restricts to $TM$ over $M$.   We wish to identify $\xi|_{P(\nu \oplus 1)}$.
By homotopy invariance of pullbacks of vector bundles (see Proposition 1.3 of \cite{Segal}), there is an equivariant isomorphism of vector bundles $TM \times I |_{D(\nu \oplus 1)} \cong \pi_D^*TM|_F$ where $\pi_D : D(\nu \oplus 1) \to F$ is the  bundle projection.   Since the involution on $TM|_F$ is trivial, there is an induced isomorphism  $\xi |_{P(\nu \oplus 1)} \cong \pi^*_P~ TM|_F$ where $\pi_P : P(\nu \oplus 1) \to F$ is the  bundle projection.

Thus $\xi$ gives a cobordism of bundles  from $TM$  to $\pi^*_P~ TM|_F$.   Since Stiefel-Whitney numbers are cobordism invariants, for all partitions $J$ of $\dim M$,
\begin{align*}
 w_J(TM)[M] &= w_J( \pi^*_P~ TM|_F  )[P(\nu \oplus 1)] \\
 & = (\pi^*_P w_J( TM|_F))[P(\nu \oplus 1)]\\
 & =0
\end{align*}
since $w_J (TM|_F)=0$ because $|J| = \dim M > \dim F$.   Thus all Stiefel-Whitney numbers of $M$ vanish, so by Thom's theorem $M$ bounds.

\end{proof}

\begin{remark}
One wonders if there could be a direct proof of Lemma \ref{+1 eigenbundle} which avoids the use of Thom's Theorem.   


Note that any involution $\tau: M \to M$ induces the involution $d \tau : TM \to TM$, but this involution does not restrict to the identity on $TM|_F$. 
\end{remark}

\begin{theorem}
An almost flat manifold bounds provided that the 2-sylow subgroup of the holonomy group is cyclic  or generalized quaternionic.
\end{theorem}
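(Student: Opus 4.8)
The plan is to reduce to a 2-group holonomy with a unique element of order two, to manufacture from the central involution a \emph{flat} bundle involution of $TM$ that is the identity over the fixed set, and then to quote Lemma \ref{+1 eigenbundle}. First I would invoke Lemma \ref{2-group}(i): realizing $M$ as an infranilmanifold $\G\bs L\rtimes G/G$ with holonomy $G$, the $G$-cover $\wt M=N\bs L$ is a nilmanifold and the odd-degree cover $H\bs\wt M$ is again an infranilmanifold, now with holonomy $H=\syl_2 G$; so $M$ bounds iff $H\bs\wt M$ does. Hence I may assume outright that $G$ is a cyclic or generalized quaternionic $2$-group. The decisive group-theoretic fact is that such a $G$ has a \emph{unique} element $\gamma_0$ of order two and that $\gamma_0$ is central in $G$ (clear for cyclic $G$; for $Q_{2^n}$ the unique involution is the central element). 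By Lemma \ref{2-group}(ii), $M$ carries a central involution $\tau$, given by translation by some $T\in Z(L)$ with $T^2\in Z(N)$ and $T$ invariant under the conjugation action of $G$.

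Next I would promote $\tau$ to a bundle involution of the flat tangent bundle. Writing $TM=(N\bs L)\times_G T_eL$ via Lemma \ref{infranilsub_flat}, I define
\begin{equation*}
\Psi\colon TM\to TM,\qquad \Psi[Nl,v]=[\,N(Tl),\,(d\gamma_0)_e v\,].
\end{equation*}
I would then check that $\Psi$ is well defined and is an involution covering $\tau\colon M\to M$. Well-definedness splits into a fiber condition, $(d\gamma_0)_e$ commutes with the $G$-action $v\mapsto (dg)_ev$ because $\gamma_0$ is central, and a base condition, $N(T\gamma l)=N(\gamma Tl)$, which follows from the $G$-invariance of $T$ modulo $Z(N)$ together with $T\in Z(L)$ and $Z(N)\subset N$; the identity $\Psi^2=\id$ uses $T^2\in Z(N)$ and $\gamma_0^2=e$. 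I would also record that $\tau$ is \emph{never} the identity on $M$: were it, its lift to $N\bs L$ would coincide with the affine action of some $g_0\in G$ of order two, hence of $\gamma_0$, and unwinding $N(Tl)=N(a\gamma_0(l))$ for a lift $(a,\gamma_0)\in\Gamma$ would force $\gamma_0$ to be a left translation, hence trivial in $\aut L$, a contradiction. Thus $\Psi$ is a \emph{nontrivial} bundle involution.

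The heart of the matter is to identify $\Psi$ over the fixed set $F$ of $\tau|_M$. I would apply Remark \ref{two_actions} with $X=N\bs L$, $H=G$ and $K=\langle\tau\rangle$, both acting freely, so that $G\bs X=M$ and $F=\coprod_{[\varphi]}F_{[\varphi]}$ is indexed by $G$-conjugacy classes of monomorphisms $\langle\tau\rangle\to G$. Since $G$ has the \emph{single} involution $\gamma_0$, there is exactly one such class, with $\varphi(\tau)=\gamma_0$; hence over any point of $F$, lifted to $Nl\in\wt F_\varphi$, the defining relation $\tau\cdot(Nl)=\varphi(\tau)\cdot(Nl)$ reads $N(Tl)=\gamma_0\cdot(Nl)=N\gamma l$ for a lift $\gamma$ of $\gamma_0$. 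Therefore on $TM|_F$,
\begin{equation*}
\Psi[Nl,v]=[\,N(Tl),(d\gamma_0)_e v\,]=[\,\gamma_0\cdot(Nl),(d\gamma_0)_e v\,]=[\,Nl,v\,],
\end{equation*}
the last step being exactly the defining relation of the associated bundle $(N\bs L)\times_G T_eL$. So $\Psi$ restricts to the identity on $TM|_F$, and Lemma \ref{+1 eigenbundle} gives that $M$ bounds; the case $F=\emptyset$ is subsumed, the hypothesis being vacuous there.

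I expect the main obstacle to lie in the middle step rather than in the final invocation. Verifying simultaneously that $\Psi$ \emph{descends} to a genuine involution of $TM$ (not merely a map of $L\times T_eL$) and that it \emph{covers} $\tau$ is precisely where the centrality of $\gamma_0$ and the $G$-invariance of $T$ must be used in tandem, and it is easy to misplace a left versus right translation in the cosets $N\bs L$. Equally essential is excluding the degenerate possibility $F=M$, in which $\Psi$ would collapse to the identity and Lemma \ref{+1 eigenbundle} would not apply; this is exactly what the faithfulness of $G\hookrightarrow\aut L$ rules out. Once these verifications are secured, the fixed-point decomposition from Remark \ref{two_actions}, the cancellation computation over $F$, and the appeal to Lemma \ref{+1 eigenbundle} are formal.
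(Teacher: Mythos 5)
Your proposal is correct and takes essentially the same route as the paper's own proof: reduce to a 2-group via Lemma \ref{2-group}(i), use the unique (hence central) involution $\gamma_0\in G$ together with the central involution from Lemma \ref{2-group}(ii), identify the fixed set via Remark \ref{two_actions} with $X=N\bs L$, $H=G$, $K=\langle\tau\rangle$, observe that the induced involution $[x,v]\mapsto[\tau x,\gamma_0 v]$ on the flat tangent bundle $(N\bs L)\times_G T_eL$ restricts to the identity over $F$, and invoke Lemma \ref{+1 eigenbundle}. Your extra verifications (well-definedness of $\Psi$, nontriviality of $\tau$, i.e.\ ruling out $F=M$) are details the paper leaves implicit, and they check out.
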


\begin{proof}
Let $M = \Gamma \backslash L \rtimes G/ G$  be an infranilmanifold with the 2-sylow subgroup of $G$ cyclic  or generalized quaternionic.
 By Lemma \ref{2-group}(i), we can pass to odd degree cover and assume that $G$ is a  2-group.
 Then there is a unique element $g \in G$ of order 2.  (In fact, according to a theorem of Burnside, a 2-group has a unique element of order 2 if and only if it is cyclic  or generalized quaternionic.)  Since the center of a $p$-group is nontrivial, $g$ is central.

 By Lemma \ref{2-group}(ii), there is a central involution $\tau \in Z(N)\backslash Z(L)$.   Since it is $G$-invariant, it induces a nontrivial involution $\tau : M \to M$. By Remark \ref{two_actions},  applied with $X = N \backslash L$, $H = G$, and $K = \langle \tau \rangle$, the fixed set of $\tau$ is
 $$F = \{[x] \in M \mid x \in N\backslash L, \tau x = g x\}.$$
 The involution $\tau$ on $M$ extends to the flat  tangent bundle $TM = (N \backslash L) \times_{G} T_eL$   via $\tau[x,v] = [\tau x, gv]$.   Then $\tau$ restricted to $TM|_F$ is the identity since for $[x] \in F$, $\tau[x,v] = [\tau x, gv] = [gx, gv] = [x,v]$.   Thus by Lemma \ref{+1 eigenbundle} $M$ is a boundary.
\end{proof}

The question of whether an infranilmanifold with holonomy group the Klein 4-group bounds remains open.

\vskip2mm

\bibliographystyle{amsalpha}

\vspace{15mm}

\end{document}